\documentclass[11pt,a4paper]{article}
\usepackage[utf8]{inputenc}
\usepackage[english]{babel}
\usepackage{amsmath}
\usepackage{amsthm}
\usepackage{amsfonts}
\usepackage{amssymb}
\usepackage{graphicx}
\usepackage{booktabs}
\usepackage[hidelinks]{hyperref}
\usepackage{subcaption}
\usepackage{multirow}
\usepackage{tikz} 
\usepackage{verbatim}
\usepackage[ruled,vlined]{algorithm2e}
\usepackage{enumitem}
\usepackage{float}
\usepackage{algorithmic}

\usetikzlibrary{arrows,shapes,positioning}
\tikzstyle{block} = [draw, rectangle, minimum width = 0.75cm, minimum height = 0.75cm]
\tikzstyle{sum} = [draw, circle, minimum size=.5cm, node distance=1.75cm]
\tikzstyle{input} = [coordinate]
\tikzstyle{output} = [coordinate]

\numberwithin{equation}{section}

\newtheorem{theorem}{Theorem}[section]

\newtheorem{remark}{Remark}[section]

\makeatletter
\newcommand{\startat}[1]{\setcounter{\@enumctr}{#1}%
	\addtocounter{\@enumctr}{-1}}
\makeatother

\newcommand{\normiii}[1]{{\vert\kern-0.25ex\vert\kern-0.25ex\vert #1 
		\vert\kern-0.25ex\vert\kern-0.25ex\vert}}

\makeatletter
\newcommand{\mylabel}[2]{#2\def\@currentlabel{#2}\label{#1}}

\title{\bf Analysis and Optimal Design of Equilibria in the Vlasov-Poisson System}
\author{A. Borz{\`i}
	\thanks{Institut f\"ur Mathematik, Universit\"at W\"urzburg, Emil-Fischer-Strasse 30,
		97074 W\"urzburg, Germany. e-mail: alfio.borzi@mathematik.uni-wuerzburg.de}
	\and 
	G. Infante
	\thanks{Dipartimento di Matematica e Informatica, Universit\`a della Calabria, 87036 Arcavacata di Rende, Cosenza, Italy
		e-mail: gennaro.infante@unical.it}
	\and
	G. Mascali
	\thanks{Dipartimento di Matematica e Informatica, Universit\`a della Calabria, 87036 Arcavacata di Rende, Cosenza, Italy, and INFN gruppo collegato Cosenza, Italy
		e-mail: giovanni.mascali@unical.it}
	\and
R. Scida
\thanks{Dipartimento di Matematica e Informatica, Universit\`a della Calabria, 87036 Arcavacata di Rende, Cosenza, Italy
	e-mail: riccardo.scida@unical.it}
}
\date{\today}

\begin{document}
	
	\maketitle

\begin{abstract}
The optimal design of equilibrium particle distributions generated by
external electric fields is investigated in the framework of stationary
Maxwellian equilibria of the Vlasov-Poisson system. The resulting
normalized Poisson-Boltzmann equation is analyzed, and existence and
uniqueness of the self-consistent equilibrium potential are established
by two complementary approaches. A Schauder fixed-point argument based
on uniform estimates for the normalized nonlinear source and a
variational approach based on a strictly convex free-energy functional
are developed. Building upon these analytical results, a unified
optimal design framework is formulated for equilibrium densities.
Valley-potential, quadratic tracking, and Kullback-Leibler design
criteria are treated within the same optimization framework.
First-order optimality conditions are derived, and a projected
nonlinear conjugate-gradient algorithm is proposed for the numerical
solution of the resulting optimization problems.
\end{abstract}

\section{Introduction}

Many physical systems evolve over long time intervals toward stationary
or metastable equilibrium configurations. In plasma physics,
charged-particle beams, semiconductor devices, and trapped particle
systems, the equilibrium density determines the spatial distribution of
the particles and therefore directly influences the operating
characteristics of the device. An important question is whether such
equilibrium configurations can be designed through suitable external
fields in order to achieve prescribed objectives, such as particle
concentration in selected regions, confinement, or approximation of a
desired density profile.

This question naturally leads to optimal design problems in which the
control variable is an externally applied electric potential, whereas
the state is given by the corresponding self-consistent equilibrium
density. Such problems combine nonlinear kinetic equilibrium theory,
elliptic partial differential equations, and optimization. 
Our mathematical model is obtained by combining
the stationary Maxwellian equilibrium of the Vlasov-Poisson system with
the associated self-consistent Poisson equation, leading to a normalized
Poisson-Boltzmann equation that serves as the state equation in the 
formulation of our optimal design problems. 

Nonlinear Poisson-Boltzmann equations arising from kinetic and
statistical-mechanical models have been investigated in several
settings, including plasma physics, stellar dynamics, and
semiconductor theory
\cite{Bavaud1991,Carrillo1998,DesvillettesDolbeault1991,GognyLions1989}. 
In the present work, we consider a normalized Poisson-Boltzmann equation
arising from Maxwellian equilibria in the presence of an externally
controlled electric field. We remark that the normalization introduces a nonlocal
dependence through the total-mass constraint and naturally fits within
an optimal design framework.

The evolution of collisionless plasmas is commonly described by the
Vlasov-Poisson system, whose mathematical theory has been extensively
developed over the last decades; see, for example,
\cite{Arsenev1975,DiPernaLions1988,HorstHunzeNeunzert1984,
LionsPerthame1991,Pfaffelmoser1992}.
Stationary solutions play a central role in plasma theory since they
describe equilibrium configurations around which the dynamics evolves.
Under suitable assumptions on the temperature and on the external
electric field, Maxwellian equilibria give rise to nonlinear
Poisson-Boltzmann equations relating the self-consistent electric
potential to the equilibrium density. The equilibrium potential determines the electric field 
generated by the particle distribution, while the corresponding equilibrium density is
described by a normalized Maxwell-Boltzmann distribution. This
normalization enforces conservation of the total particle mass and
introduces a nonlocal coupling into the resulting elliptic equation.

Equilibrium equations of Poisson-Boltzmann type have been investigated
in several contexts, including plasma physics, stellar dynamics,
semiconductor models, and statistical mechanics.
Variational formulations and existence results were developed by
Gogny and Lions \cite{GognyLions1989}, while the corresponding
equilibrium potential arising in the asymptotic analysis of the
Vlasov-Poisson-Boltzmann system was further investigated by
Desvillettes and Dolbeault \cite{DesvillettesDolbeault1991}.
Related nonlinear elliptic equations with decreasing nonlinearities have
also been analyzed by Carrillo \cite{Carrillo1998}, whereas a broader
statistical-mechanical perspective is provided by
Bavaud \cite{Bavaud1991}.

The optimal design of equilibrium particle distributions has recently
been considered in \cite{BorziInfanteMascali2025}, where external
electric fields were optimized to generate equilibrium densities
concentrated near prescribed potential wells. That formulation relied on a 
particular valley-potential criterion and
the corresponding optimality system. Many practical situations,
however, require different notions of optimality, as the
approximation of prescribed equilibrium densities or the minimization
of information-theoretic measures of discrepancy. 
These considerations motivate our development of a more general
optimization framework capable of accommodating different design
criteria while preserving the same underlying equilibrium model.

Our work substantially extends this framework by replacing the
specific valley-potential formulation with a unified optimal design
approach capable of accommodating several classes of design criteria,
including valley-potential functionals, quadratic tracking terms, and
Kullback-Leibler-type distances from prescribed target densities.

We discuss two independent existence and uniqueness theories.
The first one combines Schauder's fixed-point theorem with uniform
elliptic estimates and explicit bounds on the normalization factor.
The second one is variational and relies on the strict convexity of an
appropriate free-energy functional, thereby providing an alternative and
conceptually different proof.

Building upon these analytical results, we formulate a unified optimal design
framework. 
The corresponding first-order optimality conditions are derived through
an adjoint approach, leading to a variational inequality for constrained
controls and to an unconstrained optimality equation.
A projected nonlinear conjugate-gradient method is then outlined for the
numerical solution of the resulting optimization problem, and 
implemented to validate our optimal design strategy, 

The remainder of the paper is organized as follows. Section~2 derives the
normalized Poisson-Boltzmann equation from stationary Maxwellian equilibria of the
Vlasov-Poisson system. Section~3 proves existence by means of the Schauder
fixed-point theorem, while Section~4 establishes uniqueness through a monotonicity
argument. Section~5 provides an alternative variational proof of existence and
uniqueness. Section~6 formulates the general optimal design problem for equilibrium
densities, and Section~7 derives the corresponding first-order optimality conditions.
Section~8 outlines the numerical solution strategy, whereas Section~9 presents the
results of the numerical experiments.

\section{From the Vlasov-Poisson system to the normalized Poisson-Boltzmann equation}
\label{sec:derivation}

In this section, we recall how the normalized Poisson-Boltzmann equation arises
from stationary equilibrium solutions of the Vlasov-Poisson system in the presence
of an external electric field. 
We consider a plasma of particles confined in a bounded domain 
\(\Omega\subset\mathbb R^n\) and described by the 
following distribution function
\[
f=f(x,v,t),
\]
where \(x\in\Omega\) is the spatial variable, \(v\in\mathbb R^n\) is the particle
velocity, and \(t\geq 0\) is time. We assume that \(\Omega\) has sufficiently smooth
boundary.

In the zero magnetic field regime, and after a suitable nondimensionalization, the
following Vlasov equation is obtained
\[
\partial_t f+v\cdot\nabla_x f+\bigl(E(x,t)+E_0(x)\bigr)\cdot\nabla_v f=0,
\]
where \(\nabla_x\) and \(\nabla_v\) denote the gradients with respect to the
spatial and velocity variables, respectively.
The field \(E(x,t)\) is the self-consistent electric field generated by the particles,
whereas \(E_0(x)\) is a prescribed stationary external electric field. We assume that
both fields are conservative, namely
\[
E(x,t)=-\nabla_x U(x,t),
\qquad
E_0(x)=-\nabla_x u(x),
\]
where \(U\) is the self-consistent potential and \(u\) is the external potential.

For particles of the same positive charge, the self-consistent field is coupled to
the spatial density
\[
\rho(x,t)=\int_{\mathbb R^n} f(x,v,t)\,dv
\]
through the Poisson equation
\[
\operatorname{div}E(x,t)=\rho(x,t).
\]
Since \(E=-\nabla_x U\), this gives
\[
-\Delta U(x,t)=\rho(x,t),
\]
where \(\Delta\) denotes the Laplacian with respect to the spatial variable \(x\).

We now look for stationary Maxwellian solutions with zero mean velocity and
constant temperature \(T>0\). Thus, we assume that
\[
f_s(x,v)
=
\frac{1}{(2\pi T)^{n/2}}\rho(x)
\exp\left(-\frac{|v|^2}{2T}\right).
\]
Substituting this expression into the stationary Vlasov equation gives
\[
v\cdot\nabla_x f_s
+
(E+E_0)\cdot\nabla_v f_s=0.
\]
Since
\[
\nabla_v f_s=-\frac{v}{T}f_s,
\qquad
\nabla_x f_s=f_s\nabla_x\log\rho,
\]
we obtain
\[
f_s\,v\cdot
\left(
\nabla_x\log\rho-\frac{E+E_0}{T}
\right)=0.
\]
Therefore, we have
\[
\nabla_x\log\rho
=
\frac{E+E_0}{T}
=
-\frac{1}{T}\bigl(\nabla_x U+\nabla_x u\bigr).
\]
Equivalently, we write
\[
\nabla_x
\left(
\log\rho+\frac{U+u}{T}
\right)=0.
\]
Hence there exists a constant \(C\) such that
\[
\rho(x)=C\exp\left(-\frac{U(x)+u(x)}{T}\right).
\]

We impose the normalization
\[
\int_\Omega \rho(x)\,dx=1.
\]
This gives
\[
1
=
C\int_\Omega
\exp\left(-\frac{U(y)+u(y)}{T}\right)\,dy,
\]
and therefore
\[
C
=
\left(
\int_\Omega
\exp\left(-\frac{U(y)+u(y)}{T}\right)\,dy
\right)^{-1}.
\]
Thus the spatial density is given by
\[
\rho(x)
=
\frac{
	\exp\left(-\frac{U(x)+u(x)}{T}\right)
}{
	\int_\Omega
	\exp\left(-\frac{U(y)+u(y)}{T}\right)\,dy
}.
\]
Combining this identity with the Poisson equation \(-\Delta U=\rho\), and imposing
homogeneous Dirichlet boundary conditions, we obtain the nonlinear elliptic problem
for the stationary self-consistent potential:
\begin{equation}
	\begin{cases}
		-\Delta U(x)
		=
		\dfrac{
			\rho_0(x)\exp\left(-\dfrac{U(x)}{T}\right)
		}{
			\displaystyle\int_\Omega
			\rho_0(y)\exp\left(-\dfrac{U(y)}{T}\right)\,dy
		},
		& x\in\Omega,\\[2ex]
		U=0,
		& x\in\partial\Omega,
	\end{cases}
	\label{eq:Poisson-Boltzmann}
	\tag{P}
\end{equation}
where we have introduced the weight
\[
\rho_0(x):=\exp\left(-\frac{u(x)}{T}\right).
\]
This is the form that will be used in the following sections to prove existence and
uniqueness of solutions.
\begin{remark}
	If the total mass is not normalized to one, but instead
	\[
	\int_\Omega \rho(x)\,dx=M>0,
	\]
	then the same derivation gives
	\[
	-\Delta U(x)
	=
	M
	\frac{
		\exp\left(-\dfrac{U(x)+u(x)}{T}\right)
	}{
		\displaystyle\int_\Omega
		\exp\left(-\dfrac{U(y)+u(y)}{T}\right)\,dy
	}.
	\]
	In this work we restrict ourselves to the normalized case \(M=1\).
\end{remark}
\section{Existence via the Schauder fixed-point theorem}
\label{sec:existence-iterative}

In this section, we prove the existence of solutions to the normalized
Poisson-Boltzmann problem \eqref{eq:Poisson-Boltzmann}. 
Throughout this section we assume that
\[
\Omega\subset\mathbb R^n,\qquad n\leq 3,
\]
is a bounded domain of class \(C^{1,1}\), that \(T>0\), and that the external
potential \(u\) satisfies
\[
u\in L^1(\Omega),
\qquad
\rho_0=e^{-u/T}\in L^2(\Omega).
\]
The first assumption is used in the Jensen-type estimate which provides a lower
bound for the normalization factor, while the second one ensures that the
right-hand side of the equation belongs to \(L^2(\Omega)\).

We begin by introducing the nonlinear map which will be used in the fixed-point
argument. Given a function \(W\), we define \(U=\mathcal T(W)\), where \(U\) is the
solution of the linear Dirichlet problem
\[
\begin{cases}
	-\Delta U=
	\dfrac{
		\rho_0(x)\exp\left(-\dfrac{W(x)}{T}\right)
	}{
		\displaystyle\int_\Omega
		\rho_0(y)\exp\left(-\dfrac{W(y)}{T}\right)\,dy
	},
	& x\in\Omega,\\[2ex]
	U=0,
	& x\in\partial\Omega.
\end{cases}
\]
A fixed point of \(\mathcal T\), namely a function \(U\) such that
\[
\mathcal T(U)=U,
\]
is precisely a solution of the normalized Poisson-Boltzmann problem
\eqref{eq:Poisson-Boltzmann}.

We now prove that \(\mathcal T\) has a fixed point.

\begin{theorem}
	Let \(\Omega\subset\mathbb R^n\), \(n\leq 3\), be a bounded \(C^{1,1}\) domain.
	Let \(T>0\), and assume that
	\[
	u\in L^1(\Omega),
	\qquad
	e^{-u/T}\in L^2(\Omega).
	\]
	Then problem \((P)\) admits at least one solution
	\[
	U\in H^2(\Omega)\cap H_0^1(\Omega).
	\]
	Moreover,
	\[
	U\geq 0
	\qquad\text{a.e. in }\Omega.
	\]
\end{theorem}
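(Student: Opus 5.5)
The plan is to apply Schauder's fixed-point theorem to \(\mathcal T\) on the closed convex set
\[
K=\{W\in L^2(\Omega):\ W\ge 0 \text{ a.e.},\ \|W\|_{L^2(\Omega)}\le R\},
\]
with \(R\) to be fixed. The key structural fact is that, for \(W\ge 0\), the factor \(e^{-W/T}\le 1\), so the numerator of the source is bounded pointwise by \(\rho_0\in L^2(\Omega)\). Everything therefore reduces to a lower bound on the normalization
\[
Z(W)=\int_\Omega \rho_0 e^{-W/T}\,dy .
\]
For this I would use Jensen's inequality for the convex function \(\exp\) with respect to the normalized Lebesgue measure on \(\Omega\):
\[
Z(W)=\int_\Omega e^{-(u+W)/T}\,dy\ \ge\ |\Omega|\exp\Bigl(-\frac{1}{T|\Omega|}\int_\Omega (u+W)\,dy\Bigr)
\ \ge\ |\Omega|\exp\Bigl(-\frac{\|u\|_{L^1}+|\Omega|^{1/2}R}{T|\Omega|}\Bigr)=:z_R>0 .
\]
This is where \(u\in L^1\) enters. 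Consequently the source \(g_W:=\rho_0e^{-W/T}/Z(W)\) satisfies \(\|g_W\|_{L^2}\le \|\rho_0\|_{L^2}/z_R\).

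The main obstacle is closing the self-map condition \(\mathcal T(K)\subset K\). The bound \(z_R\) deteriorates exponentially in \(R\), so the naive estimate \(\|\mathcal T W\|_{L^2}\le C\|\rho_0\|_{L^2}e^{cR}\) does not close. To avoid this I would exploit \(L^1\) information, which is uniform: \(\|g_W\|_{L^1}=1\) for every \(W\). Since \(n\le 3\), the Dirichlet Green's function \(G(x,\cdot)\) lies in \(L^2(\Omega)\) uniformly in \(x\) when \(n\le 3\) (it behaves like \(|x-y|^{2-n}\), which is square integrable for \(n<4\)). By duality, or equivalently the Stampacchia \(L^1\) estimate \(\|U\|_{W^{1,q}}\le C\|f\|_{L^1}\) for \(q<n/(n-1)\) combined with a Sobolev embedding into \(L^2\) when \(n\le3\), we get
\[
\|\mathcal T W\|_{L^2}\le C_\Omega \|g_W\|_{L^1}=C_\Omega ,
\]
with \(C_\Omega\) independent of \(R\). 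Choosing \(R=C_\Omega\) gives \(\mathcal T(K)\subset K\). The only point to verify carefully is this \(L^1\to L^2\) bound; failing that, I would replace \(L^2\) by \(L^1\) in the definition of \(K\), where \(\|\mathcal TW\|_{L^1}\le \|G\|_{L^\infty_yL^1_x}\) is elementary. Nonnegativity \(\mathcal T W\ge 0\) follows from the weak maximum principle, since \(g_W\ge 0\). Once \(R\) is fixed, \(z_R\) gives the uniform \(L^2\) bound on \(g_W\), and \(H^2\) regularity on the \(C^{1,1}\) domain yields \(\|\mathcal TW\|_{H^2}\le C\|\rho_0\|_{L^2}/z_R\) for all \(W\in K\).

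Compactness: \(\mathcal T(K)\) is bounded in \(H^2\cap H^1_0\), hence relatively compact in \(L^2\) by Rellich. Continuity: if \(W_k\to W\) in \(L^2\), pass to an a.e. convergent subsequence. Then \(\rho_0e^{-W_k/T}\to\rho_0e^{-W/T}\) in \(L^2\) by dominated convergence with dominant \(\rho_0\), and \(Z(W_k)\to Z(W)\ge z_R\). Hence \(g_{W_k}\to g_W\) in \(L^2\), and linear elliptic estimates give \(\mathcal TW_k\to\mathcal TW\) in \(H^2\). The subsequence principle, using uniqueness of the limit, yields convergence of the full sequence.

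Schauder's theorem then provides a fixed point \(U\in K\). By the regularity step, \(U=\mathcal T U\in H^2(\Omega)\cap H^1_0(\Omega)\), it solves (P), and \(U\ge0\) a.e. because \(U\in K\).
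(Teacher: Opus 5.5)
Your proof is correct. Its skeleton (Schauder's theorem for $\mathcal T$, Jensen's inequality to bound the normalization from below, $H^2$ regularity on the $C^{1,1}$ domain, compactness) is the same as the paper's. The real difference is the estimate that closes the self-map condition, together with the choice of topology.

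The paper works in $C(\overline\Omega)$. Its invariant set consists of nonnegative $W$ with $\int_\Omega W\,dx\le\zeta_\Omega$ plus a sup-norm bound, where $\zeta$ is the torsion function ($-\Delta\zeta=1$, $\zeta=0$ on $\partial\Omega$, $\zeta_\Omega=\|\zeta\|_{L^\infty(\Omega)}$). Testing $-\Delta U=F_W$ against $\zeta$ gives $\int_\Omega U\,dx=\int_\Omega \zeta F_W\,dx\le\zeta_\Omega$. This controls, uniformly and with no dimension restriction, exactly the quantity that enters Jensen's bound. The hypothesis $n\le 3$ is then spent only on $H^2\hookrightarrow C(\overline\Omega)$, which makes continuity of $\mathcal T$ a matter of uniform convergence of $e^{-W_j/T}$.

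You instead control the $L^2$ norm through the $L^1\to L^2$ boundedness of the Dirichlet solution operator. This is valid for $n\le 3$, for instance by duality with $(-\Delta)^{-1}:L^2\to H^2\hookrightarrow L^\infty$. It is, however, a heavier tool than the paper's single integration by parts, and it is where your argument uses the dimension hypothesis. Working in $L^2$ also forces the a.e.-subsequence and dominated-convergence argument for continuity, which you carry out correctly.

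Your fallback with an $L^1$ constraint is precisely the paper's estimate, since $\int_\Omega G(x,y)\,dx=\zeta(y)$. Combined with your $L^2$ topology, where compactness comes from Rellich rather than from $H^2\hookrightarrow C(\overline\Omega)$, that variant would in fact give existence in every dimension.
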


\begin{proof}
Let \(\zeta\) be the torsion function of the domain \(\Omega\), that is, the solution
of the classical torsion problem
\[
\begin{cases}
	-\Delta \zeta=1,
	& x\in\Omega,\\
	\zeta=0,
	& x\in\partial\Omega.
\end{cases}
\]
	By elliptic regularity and the maximum principle, \(\zeta\in L^\infty(\Omega)\) and
	\(\zeta\geq 0\). We set
	\[
	\zeta_\Omega:=\|\zeta\|_{L^\infty(\Omega)}.
	\]
	
	Let \(W\in H^2(\Omega)\cap H_0^1(\Omega)\) be a nonnegative function such that
	\[
	\int_\Omega W(x)\,dx\leq \zeta_\Omega.
	\]
	Define
	\[
	A(W):=
	\int_\Omega \rho_0(y)\exp\left(-\frac{W(y)}{T}\right)\,dy
	=
	\int_\Omega
	\exp\left(-\frac{W(y)+u(y)}{T}\right)\,dy.
	\]
	Since \(W\geq 0\) and \(\rho_0\in L^2(\Omega)\subset L^1(\Omega)\), we have
	\[
	0<A(W)\leq \int_\Omega \rho_0(y)\,dy<+\infty.
	\]
	
	We next prove a uniform lower bound for \(A(W)\). By Jensen's inequality,
	\[
	A(W)
	=
	\int_\Omega
	\exp\left(-\frac{W(y)+u(y)}{T}\right)\,dy
	\geq
	|\Omega|
	\exp\left[
	-\frac{1}{T|\Omega|}
	\int_\Omega (W(y)+u(y))\,dy
	\right].
	\]
	Using
	\[
	\int_\Omega W(y)\,dy\leq \zeta_\Omega,
	\]
	we obtain
	\[
	A(W)\geq
	|\Omega|
	\exp\left[
	-\frac{1}{T}
	\left(
	\frac{\zeta_\Omega}{|\Omega|}
	+
	\frac{1}{|\Omega|}\int_\Omega u(y)\,dy
	\right)
	\right]
	=:A_* >0.
	\]
	Therefore, if
	\[
	F_W(x):=
	\dfrac{
		\rho_0(x)\exp\left(-\dfrac{W(x)}{T}\right)
	}{
		A(W)
	},
	\]
	then
	\[
	0\leq F_W(x)\leq \frac{\rho_0(x)}{A_*}
	\qquad\text{a.e. in }\Omega.
	\]
	Consequently, we obtain the estimate
	\[
	\|F_W\|_{L^2(\Omega)}
	\leq
	\frac{1}{A_*}\|\rho_0\|_{L^2(\Omega)}.
	\]
	Moreover, by definition of \(F_W\), it follows that 
	\[
	\int_\Omega F_W(x)\,dx=1.
	\]
	
	Let now \(U=\mathcal T(W)\), namely
	\[
	\begin{cases}
		-\Delta U=F_W,
		& x\in\Omega,\\
		U=0,
		& x\in\partial\Omega.
	\end{cases}
	\]
	Since \(F_W\geq 0\), the weak maximum principle gives
	\[
	U\geq 0
	\qquad\text{a.e. in }\Omega.
	\]
Testing the equation for \(U\) with \(\zeta\), and the equation for \(\zeta\) with
\(U\), we obtain
\[
\int_\Omega U(x)\,dx
=
\int_\Omega \zeta(x)F_W(x)\,dx.
\]
Therefore, we have
\[
\int_\Omega U(x)\,dx
\leq
\|\zeta\|_{L^\infty(\Omega)}
\int_\Omega F_W(x)\,dx
=
\zeta_\Omega.
\]	
Thus the map \(\mathcal T\) preserves the uniform \(L^1\) bound.
	
By elliptic \(H^2\)-regularity for the Dirichlet problem on \(C^{1,1}\) domains,
	there exists a constant \(C_{\rm reg}>0\), depending only on \(\Omega\), such that
	\[
	\|U\|_{H^2(\Omega)}
	\leq
	C_{\rm reg}\|F_W\|_{L^2(\Omega)}.
	\]
	Hence
	\[
	\|\mathcal T(W)\|_{H^2(\Omega)}
	\leq
	\frac{C_{\rm reg}}{A_*}\|\rho_0\|_{L^2(\Omega)}
	=:C_*.
	\]
	Since \(n\leq 3\), the Sobolev embedding theorem gives 
	$H^2(\Omega)\hookrightarrow C(\overline\Omega)$. Therefore
	\[
	\|\mathcal T(W)\|_{C(\overline\Omega)}
	\leq C_{\rm emb} C_*,
	\]
	where \(C_{\rm emb}>0\) is the embedding constant.
	
	We set $M:=C_{\rm emb}C_*$, and introduce the closed convex set
	\[
	\mathcal K:=
	\left\{
	W\in C(\overline\Omega):
	W\geq 0,\ 
	W=0 \text{ on }\partial\Omega,\ 
	\int_\Omega W(x)\,dx\leq \zeta_\Omega,\ 
	\|W\|_{C(\overline\Omega)}\leq M
	\right\}.
	\]
	The previous estimates show that $\mathcal T(\mathcal K)\subseteq \mathcal K$.

	We now remark that \(\mathcal T\) is continuous from \(\mathcal K\) into
	\(C(\overline\Omega)\). Indeed, if \(W_j\to W\) in \(C(\overline\Omega)\), then
	\[
	\exp\left(-\frac{W_j}{T}\right)
	\to
	\exp\left(-\frac{W}{T}\right)
	\]
	uniformly in \(\overline\Omega\). Since \(\rho_0\in L^2(\Omega)\), this implies
	\[
	F_{W_j}\to F_W
	\qquad\text{in }L^2(\Omega).
	\]
	By elliptic regularity,
	\[
	\mathcal T(W_j)\to \mathcal T(W)
	\qquad\text{in }H^2(\Omega),
	\]
	and therefore also in \(C(\overline\Omega)\).
	
	Moreover, \(\mathcal T(\mathcal K)\) is bounded in \(H^2(\Omega)\). Since
	\(H^2(\Omega)\) is compactly embedded into \(C(\overline\Omega)\) for \(n\leq 3\),
	the map \(\mathcal T:\mathcal K\to\mathcal K\) is compact.
	
	By Schauder's fixed point theorem, there exists \(U\in\mathcal K\) such that
	\[
	\mathcal T(U)=U.
	\]
	Hence, \(U\) satisfies the equation
	\[
	-\Delta U
	=
	\dfrac{
		\rho_0(x)\exp\left(-\dfrac{U(x)}{T}\right)
	}{
		\displaystyle\int_\Omega
		\rho_0(y)\exp\left(-\dfrac{U(y)}{T}\right)\,dy
	}
	\]
	with homogeneous Dirichlet boundary conditions. The \(H^2\)-regularity follows
	from the estimate above, and the nonnegativity follows from the maximum principle.
	This concludes the proof.
\end{proof}

\section{Uniqueness of the solution}
\label{sec:uniqueness}

The uniqueness of solutions to the normalized Poisson-Boltzmann equation can be
proved by a comparison argument, following the strategy used for the equilibrium
equation in the optimal design framework. More precisely, one compares two
solutions through their normalizing factors and then applies a maximum/minimum
principle argument.

In this section, we present an alternative proof, based on the monotonicity of the normalized
Gibbs density. This argument is short and will also be useful in connection with the
variational formulation discussed in the next section.

Let \(U_1,U_2\in H^2(\Omega)\cap H_0^1(\Omega)\) be two solutions of
problem \eqref{eq:Poisson-Boltzmann}.
Set
\[
\rho_i(x)
:=
\frac{
	\exp\left(-\dfrac{U_i(x)+u(x)}{T}\right)
}{
	\displaystyle\int_\Omega
	\exp\left(-\dfrac{U_i(y)+u(y)}{T}\right)\,dy
},\quad i=1,2.
\]
Then, we have
\[
-\Delta U_i=\rho_i,
\qquad i=1,2.
\]
Next, we define $w:=U_1-U_2$, and subtracting the two equations, we obtain
\[
-\Delta w=\rho_1-\rho_2,
\qquad w=0 \quad \text{on } \partial\Omega.
\]
Testing this equation with \(w\), we get
\[
\int_\Omega |\nabla w|^2\,dx
=
\int_\Omega (\rho_1-\rho_2)w\,dx.
\]
We now prove that the right-hand side is nonpositive.

For \(s\in[0,1]\), define
\[
U_s:=U_2+s(U_1-U_2)=U_2+sw
\]
and
\[
\rho_s(x)
:=
\frac{
	\exp\left(-\dfrac{U_s(x)+u(x)}{T}\right)
}{
	\displaystyle\int_\Omega
	\exp\left(-\dfrac{U_s(y)+u(y)}{T}\right)\,dy
}.
\]
Next, consider the function
\[
m(s):=\int_\Omega \rho_s(x)w(x)\,dx.
\]
Differentiating with respect to \(s\), we obtain
\[
m'(s)
=
-\frac{1}{T}
\left[
\int_\Omega \rho_s(x)w(x)^2\,dx
-
\left(\int_\Omega \rho_s(x)w(x)\,dx\right)^2
\right].
\]
Since \(\rho_s\geq 0\) and
\[
\int_\Omega \rho_s(x)\,dx=1,
\]
the term in brackets is the variance of \(w\) with respect to the probability density
\(\rho_s\). Hence, it holds
\[
m'(s)\leq 0
\qquad\text{for every }s\in[0,1].
\]
Therefore \(m\) is nonincreasing, and
\[
\int_\Omega (\rho_1-\rho_2)w\,dx
=
m(1)-m(0)
\leq 0.
\]
Consequently, we have 
\[
\int_\Omega |\nabla w|^2\,dx\leq 0.
\]
It follows that
\[
\nabla w=0
\qquad\text{a.e. in }\Omega.
\]
Since \(w\in H_0^1(\Omega)\), we conclude that
\[
w=0
\qquad\text{a.e. in }\Omega.
\]
Thus
\[
U_1=U_2,
\]
and the solution is unique.

\section{A variational proof inspired by Gogny-Lions and Desvillettes-Dolbeault}
\label{sec:variational-proof}

In this section, we give a second proof of existence and uniqueness, based on a
variational formulation. The argument is inspired by the variational approach used
by Gogny and Lions for equilibrium electron densities in plasmas and by
Desvillettes and Dolbeault for the equilibrium potential arising in the long-time
asymptotics of the Vlasov-Poisson-Boltzmann equation; see 
\cite{DesvillettesDolbeault1991,GognyLions1989}. 

We consider again problem \eqref{eq:Poisson-Boltzmann},
and we assume that \(\Omega\subset\mathbb R^n\), \(n\leq 3\), is a bounded
\(C^{1,1}\) domain, that \(T>0\), and that
\[
u\in L^1(\Omega),
\qquad
\rho_0=e^{-u/T}\in L^2(\Omega).
\]

The natural energy associated with \((P)\) is given by
\[
\mathcal E(U)
=
\frac12\int_\Omega |\nabla U|^2\,dx
+
T\log\left(
\int_\Omega \rho_0(x)\exp\left(-\frac{U(x)}{T}\right)\,dx
\right).
\]
We regard \(\mathcal E\) as an extended functional on \(H_0^1(\Omega)\), with
values in \((-\infty,+\infty]\).

\begin{theorem}
	Under the assumptions above, the functional \(\mathcal E\) admits a unique minimizer
	in \(H_0^1(\Omega)\). This minimizer is the unique weak solution of \((P)\).
	Moreover, it holds
	\[
	U\in H^2(\Omega)\cap H_0^1(\Omega),
	\qquad
	U\geq 0
	\quad\text{a.e. in }\Omega.
	\]
\end{theorem}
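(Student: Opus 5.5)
The plan is the direct method of the calculus of variations, followed by an identification of the Euler--Lagrange equation with \((P)\).

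\emph{Step 1: well-definedness and lower bounds.} Set
\[
Z(U):=\int_\Omega \rho_0 e^{-U/T}\,dx\in(0,+\infty].
\]
Then \(\mathcal E\) is finite at \(U=0\), since \(\rho_0\in L^1\). It is never \(-\infty\), because Jensen's inequality with \(u\in L^1\) gives
\[
T\log Z(U)\ \ge\ T\log|\Omega|-\frac{1}{|\Omega|}\int_\Omega (U+u)\,dx .
\]
By Poincar\'e, the linear term is bounded by \(C\|\nabla U\|_{L^2}\). Hence
\[
\mathcal E(U)\ge \tfrac12\|\nabla U\|_{L^2}^2-C\|\nabla U\|_{L^2}-C,
\]
so \(\mathcal E\) is coercive and bounded below on \(H_0^1(\Omega)\).

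\emph{Step 2: convexity and lower semicontinuity.} The map \(U\mapsto \log Z(U)\) is convex by H\"older's inequality (log-convexity of Laplace transforms). The Dirichlet term is strictly convex on \(H_0^1\). Therefore \(\mathcal E\) is strictly convex wherever it is finite. For lower semicontinuity, take \(U_k\rightharpoonup U\) in \(H_0^1\); along a subsequence \(U_k\to U\) a.e. by Rellich. Fatou's lemma then gives \(Z(U)\le\liminf Z(U_k)\), and the gradient term is weakly lsc. Taking a minimizing sequence, the direct method yields a minimizer, and strict convexity gives its uniqueness.

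\emph{Step 3: Euler--Lagrange equation (main obstacle).} To differentiate \(\log Z\) at the minimizer \(U\) we need \(\rho_0e^{-U/T}\) to be controlled, but a priori \(U\in H_0^1\) only. My first idea is to show \(U\ge0\) by truncation, using \(\mathcal E(U^+)\le\mathcal E(U)\): replacing \(U\) by \(U^+\) lowers the Dirichlet energy. However, it also decreases \(e^{-U/T}\) only where \(U<0\), which \emph{increases} nothing, while it changes \(Z\) upward where \(U<0\). So this comparison has the wrong sign, and I need a different argument. Instead I would first minimize over the convex set \(\{V\ge 0\}\), where \(Z(V)\le\|\rho_0\|_{L^1}\). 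There, for any \(\varphi\in H_0^1\), dominated convergence applies to \(\rho_0e^{-(V+t\varphi)/T}\) provided \(\varphi\) is bounded; if needed, first use \(\varphi\in C_c^\infty\), handling directions that keep \(V+t\varphi\ge0\). Taking one-sided variations \(\varphi\ge0\) gives
\[
-\Delta V\ \ge\ \frac{\rho_0 e^{-V/T}}{Z(V)}\ \ge\ 0,
\]
with equality on \(\{V>0\}\). I would then argue that this constrained minimizer actually solves \((P)\), by comparison with the fixed point from the Schauder theorem above. A cleaner alternative, which I would try first, is this: the Schauder solution \(U_S\ge0\) satisfies \((P)\), and by convexity any critical point of \(\mathcal E\) is the global minimizer. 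So it suffices to check the convexity inequality
\[
\mathcal E(V)\ge\mathcal E(U_S)+\langle \mathcal E'(U_S),V-U_S\rangle=\mathcal E(U_S)\qquad\text{for all } V\in H_0^1,
\]
which only requires differentiating at \(U_S\in L^\infty\), where \(\rho_0e^{-U_S/T}\in L^2\). For the \(\log Z\) term, this inequality follows from the elementary bound \(\log Z(V)\ge \log Z(U_S)-\frac1T\int \rho_{U_S}(V-U_S)\,dx\), i.e.\ Jensen's inequality with respect to the probability density \(\rho_{U_S}\). Hence \(U_S\) is the unique minimizer.

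\emph{Step 4: uniqueness and regularity.} Uniqueness of weak solutions follows from the monotonicity argument of Section~\ref{sec:uniqueness}, which needs only \(w\in H_0^1\) together with integrability of \(\rho_s w^2\); alternatively, every weak solution is a minimizer by the same Jensen argument, and the minimizer is unique. The regularity \(U\in H^2\) and the sign \(U\ge0\) are inherited from the preceding existence theorem.
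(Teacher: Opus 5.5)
Your argument is correct, but it differs from the paper's at the Euler--Lagrange step, and the reason you give for dropping the truncation route is a sign error. For $U\in H_0^1(\Omega)$ we have $U^+\ge U$, so $e^{-U^+/T}\le e^{-U/T}$: on $\{U<0\}$ a factor $e^{-U/T}>1$ is replaced by $1$. Thus $Z(U^+)\le Z(U)$, and since $\log$ is increasing, both terms of $\mathcal E$ decrease, giving $\mathcal E(U^+)\le\mathcal E(U)$.

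This is the paper's Step~1. It lets the paper use a nonnegative minimizing sequence, so the minimizer satisfies $U\ge0$ and $\rho_0e^{-U/T}\le\rho_0\in L^2(\Omega)$. Dominated convergence then justifies differentiating $s\mapsto\mathcal E(U+s\varphi)$ for $\varphi\in H_0^1(\Omega)\cap L^\infty(\Omega)$. The weak form of $(P)$, $U\in H^2(\Omega)$ and $U\ge0$ follow with no appeal to Section~3. In your setting, where existence on all of $H_0^1(\Omega)$ comes from Fatou, the same inequality gives $U^+=U$ at once, since $U^+$ is then also a minimizer and the minimizer is unique.

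Your substitute shows that the Schauder solution $U_S$ is a global minimizer, via the supporting-line inequality $T\log Z(V)\ge T\log Z(U_S)-\int_\Omega\rho_{U_S}(V-U_S)\,dx$ (Jensen with respect to $\rho_{U_S}\,dx$). This is sound, and it supplies what the paper leaves implicit in its last sentence: why a solution of $(P)$ must be the minimizer. Applied to any weak solution $W$, it also gives uniqueness of weak solutions directly. For this, note that $W$ is nonnegative by the maximum principle, so $\rho_W\in L^2(\Omega)$ and the weak form can be tested with $V-W$. Your Fatou-based lower semicontinuity and your coercivity bound on all of $H_0^1(\Omega)$ are also slightly cleaner than the paper's restriction to the nonnegative cone.

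The cost is that existence now rests on the Schauder theorem. You therefore lose the independent, purely variational existence proof that this section is meant to provide. Your obstacle-problem detour over $\{V\ge0\}$ also becomes unnecessary once the truncation is read with the correct sign.
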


\begin{proof}
	We divide the proof into several steps.
	
	\medskip
	
	\noindent
	\textit{Step 1: reduction to nonnegative functions.}
	Let \(U\in H_0^1(\Omega)\), and denote by
	\[
	U^+(x):=\max\{U(x),0\}
	\]
	its positive part. Since $|\nabla U^+|\leq |\nabla U|$ a.e. in $\Omega$, we have
	\[
	\frac12\int_\Omega |\nabla U^+|^2\,dx
	\leq
	\frac12\int_\Omega |\nabla U|^2\,dx.
	\]
	Moreover, since \(U^+\geq U\), we have
	\[
	\exp\left(-\frac{U^+}{T}\right)
	\leq
	\exp\left(-\frac{U}{T}\right),
	\]
	and therefore
	\[
	\int_\Omega \rho_0(x)\exp\left(-\frac{U^+(x)}{T}\right)\,dx
	\leq
	\int_\Omega \rho_0(x)\exp\left(-\frac{U(x)}{T}\right)\,dx.
	\]
	It follows that
	\[
	\mathcal E(U^+)\leq \mathcal E(U).
	\]
	Thus, in the minimization of \(\mathcal E\), it is not restrictive to consider
	nonnegative functions.
	
	\medskip
	
	\noindent
	\textit{Step 2: coercivity.}
	Let \(U\in H_0^1(\Omega)\), \(U\geq 0\). We recall that, by Jensen's inequality, 
	we have
	\[
	A(U)
	\geq
	|\Omega|
	\exp\left[
	-\frac{1}{T|\Omega|}
	\int_\Omega (U(x)+u(x))\,dx
	\right].
	\]
	Hence the estimate
	\[
	T\log A(U)
	\geq
	T\log|\Omega|
	-
	\frac{1}{|\Omega|}\int_\Omega U(x)\,dx
	-
	\frac{1}{|\Omega|}\int_\Omega u(x)\,dx.
	\]
	Using H\"older's inequality and Poincar\'e's inequality, we obtain
	\[
	\int_\Omega U(x)\,dx
	\leq
	|\Omega|^{1/2}\|U\|_{L^2(\Omega)}
	\leq
	C_\Omega \|\nabla U\|_{L^2(\Omega)}.
	\]
	Therefore it holds
	\[
	\mathcal E(U)
	\geq
	\frac12\|\nabla U\|_{L^2(\Omega)}^2
	-
	C_\Omega \|\nabla U\|_{L^2(\Omega)}
	-
	C,
	\]
	where \(C\) depends only on \(\Omega,T\), and \(u\). This shows that
	\(\mathcal E\) is bounded from below and coercive on the nonnegative cone of
	\(H_0^1(\Omega)\).
	
	\medskip
	
	\noindent
	\textit{Step 3: existence of a minimizer.}
	Let \((U_j)\) be a minimizing sequence. By Step 1, we may assume that
	\[
	U_j\geq 0
	\qquad\text{a.e. in }\Omega.
	\]
	By coercivity, \((U_j)\) is bounded in \(H_0^1(\Omega)\). Hence, up to a subsequence,
	there exists \(U\in H_0^1(\Omega)\) such that
	\[
	U_j\rightharpoonup U
	\qquad\text{weakly in }H_0^1(\Omega),
	\]
	and
	\[
	U_j\to U
	\qquad\text{a.e. in }\Omega.
	\]
	Since \(U_j\geq 0\), we also have \(U\geq 0\) a.e. in \(\Omega\).
	
	The Dirichlet term is weakly lower semicontinuous:
	\[
	\int_\Omega |\nabla U|^2\,dx
	\leq
	\liminf_{j\to\infty}
	\int_\Omega |\nabla U_j|^2\,dx.
	\]
	Moreover, it holds
	\[
	0\leq
	\rho_0(x)\exp\left(-\frac{U_j(x)}{T}\right)
	\leq
	\rho_0(x),
	\]
	and \(\rho_0\in L^1(\Omega)\). Since
	\[
	\rho_0(x)\exp\left(-\frac{U_j(x)}{T}\right)
	\to
	\rho_0(x)\exp\left(-\frac{U(x)}{T}\right)
	\quad\text{a.e. in }\Omega,
	\]
	Lebesgue's dominated convergence theorem gives
	\[
	\int_\Omega
	\rho_0(x)\exp\left(-\frac{U_j(x)}{T}\right)\,dx
	\to
	\int_\Omega
	\rho_0(x)\exp\left(-\frac{U(x)}{T}\right)\,dx.
	\]
	Therefore, we obtain 
	\[
	\mathcal E(U)
	\leq
	\liminf_{j\to\infty}\mathcal E(U_j),
	\]
	and \(U\) is a minimizer of \(\mathcal E\).
	
	\medskip
	
	\noindent
	\textit{Step 4: Euler-Lagrange equation.}
	Let \(\varphi\in H_0^1(\Omega)\cap L^\infty(\Omega)\). Since \(U\) is a minimizer,
	the function
	\[
	s\mapsto \mathcal E(U+s\varphi)
	\]
	has derivative equal to zero at \(s=0\). We compute
	\[
	\frac{d}{ds}\bigg|_{s=0}
	\frac12\int_\Omega |\nabla(U+s\varphi)|^2\,dx
	=
	\int_\Omega \nabla U\cdot\nabla\varphi\,dx.
	\]
	Moreover, we have
	\[
	\frac{d}{ds}\bigg|_{s=0}
	T\log\left(
	\int_\Omega
	\rho_0(x)\exp\left(-\frac{U(x)+s\varphi(x)}{T}\right)\,dx
	\right)
	=
	-
	\frac{
		\displaystyle\int_\Omega
		\rho_0(x)\exp\left(-\dfrac{U(x)}{T}\right)\varphi(x)\,dx
	}{
		\displaystyle\int_\Omega
		\rho_0(y)\exp\left(-\dfrac{U(y)}{T}\right)\,dy
	}.
	\]
	Therefore, we obtain the identity
	\[
	\int_\Omega \nabla U\cdot\nabla\varphi\,dx
	=
	\int_\Omega
	\frac{
		\rho_0(x)\exp\left(-\dfrac{U(x)}{T}\right)
	}{
		\displaystyle\int_\Omega
		\rho_0(y)\exp\left(-\dfrac{U(y)}{T}\right)\,dy
	}
	\varphi(x)\,dx.
	\]
	By density, this identity holds for every \(\varphi\in H_0^1(\Omega)\). Hence \(U\)
	is a weak solution of \((P)\). Since \(U\geq 0\), we have
	\[
	0\leq
	\frac{
		\rho_0(x)\exp\left(-\dfrac{U(x)}{T}\right)
	}{
		\displaystyle\int_\Omega
		\rho_0(y)\exp\left(-\dfrac{U(y)}{T}\right)\,dy
	}
	\leq
	\frac{\rho_0(x)}{A(U)}.
	\]
	Since \(\rho_0\in L^2(\Omega)\), the right-hand side of the equation belongs to
	\(L^2(\Omega)\). Elliptic regularity for the Dirichlet problem on \(C^{1,1}\) domains
	then gives
	\[
	U\in H^2(\Omega)\cap H_0^1(\Omega).
	\]
	
	\medskip
	
	\noindent
	\textit{Step 5: uniqueness by strict convexity.}
	We now prove that the minimizer is unique. Let \(U,V\in H_0^1(\Omega)\) and
	\(\theta\in(0,1)\). By H\"older's inequality, it follows that
	\[
	\begin{aligned}
		&\int_\Omega
		\rho_0(x)
		\exp\left(
		-\frac{\theta U(x)+(1-\theta)V(x)}{T}
		\right)\,dx
		\\
		&\qquad
		=
		\int_\Omega
		\left[
		\rho_0(x)\exp\left(-\frac{U(x)}{T}\right)
		\right]^\theta
		\left[
		\rho_0(x)\exp\left(-\frac{V(x)}{T}\right)
		\right]^{1-\theta}
		\,dx
		\\
		&\qquad
		\leq
		\left(
		\int_\Omega
		\rho_0(x)\exp\left(-\frac{U(x)}{T}\right)\,dx
		\right)^\theta
		\left(
		\int_\Omega
		\rho_0(x)\exp\left(-\frac{V(x)}{T}\right)\,dx
		\right)^{1-\theta}.
	\end{aligned}
	\]
	Thus the logarithmic term is convex. On the other hand, the Dirichlet term is
	strictly convex on \(H_0^1(\Omega)\). More precisely,
	\[
	\begin{aligned}
		\frac12
		\int_\Omega
		|\nabla(\theta U+(1-\theta)V)|^2\,dx
		&\leq
		\theta\frac12\int_\Omega |\nabla U|^2\,dx
		+
		(1-\theta)\frac12\int_\Omega |\nabla V|^2\,dx
		\\
		&\quad
		-
		\frac{\theta(1-\theta)}{2}
		\int_\Omega |\nabla(U-V)|^2\,dx.
	\end{aligned}
	\]
	Consequently, \(\mathcal E\) is strictly convex. Hence it has at most one minimizer.
	
	Since every minimizer solves \((P)\), and since the solution of \((P)\) is a critical
	point of \(\mathcal E\), the minimizer is the unique weak solution of the normalized
	Poisson-Boltzmann equation. This concludes the proof.
\end{proof}

\section{Optimal design of equilibrium densities}
\label{sec:optimal-design}

In this section, we formulate our general optimal design problem 
for equilibrium densities. The
aim is to determine an external potential \(u\), and hence an external electric field
\[
E_0=-\nabla u,
\]
such that the corresponding equilibrium density satisfies a prescribed design
criterion.

For a given external potential \(u\), we denote by \(U_u\) the unique solution of
problem \eqref{eq:Poisson-Boltzmann} corresponding to \(u\). The associated
normalized equilibrium density is given by
\[
\rho_u(x)
:=
\dfrac{
	\exp\left(-\dfrac{U_u(x)+u(x)}{T}\right)
}{
	\displaystyle\int_\Omega
	\exp\left(-\dfrac{U_u(y)+u(y)}{T}\right)\,dy
}.
\]
By construction, we have
\[
\rho_u\geq 0,
\qquad
\int_\Omega \rho_u(x)\,dx=1.
\]
Therefore, \(\rho_u\) is a probability density on \(\Omega\).

We introduce a general design datum, denoted by \(\eta\). Depending on the model,
\(\eta\) may represent either a valley potential \(V\) or a prescribed target density
\(\rho_d\). We then consider the corresponding objective functional
\[
G(\rho,\eta),
\]
which measures the design criterion imposed on the equilibrium density \(\rho\).

Let \(\alpha>0\). We define the cost functional
\[
J(\rho,u;\eta)
:=
\frac12 G(\rho,\eta)
+
\frac{\alpha}{2}\|u\|_{H^1(\Omega)}^2.
\]
The first term measures how well the equilibrium density satisfies the prescribed
design requirement, while the second term is a Tikhonov regularization term which
penalizes large or highly oscillatory external potentials.

This formulation can accommodate several relevant choices.
If \(\eta=V\), where \(V\in L^\infty(\Omega)\) is a valley potential, we may take
\[
G(\rho,V)
=
2\int_\Omega V(x)\rho(x)\,dx.
\]
Then
\[
J(\rho,u;V)
=
\int_\Omega V(x)\rho(x)\,dx
+
\frac{\alpha}{2}\|u\|_{H^1(\Omega)}^2,
\]
which corresponds to the valley-potential formulation.

On the other hand, if \(\eta=\rho_d\) is a prescribed target probability density,
one may take the classical tracking error functional
\[
G(\rho,\rho_d)
=
\|\rho-\rho_d\|_{L^2(\Omega)}^2,
\]
or the Kullback-Leibler divergence
\[
G(\rho,\rho_d)
=
\int_\Omega
\rho(x)\log\left(\frac{\rho(x)}{\rho_d(x)}\right)\,dx.
\]
In the latter case, we assume $\rho_d(x)>0$ for a.e. $x\in\Omega$, 
so that the logarithm is well defined whenever \(\rho=\rho_u\). This is consistent
with the fact that the equilibrium density generated by the normalized
Poisson-Boltzmann equation is strictly positive in \(\Omega\).

In the following, we keep \(G\) general and assume that it is G\^ateaux differentiable
with respect to its first argument. We denote 
\[
q_{\rho,\eta}(x)
:=
\frac12
\frac{\delta G}{\delta\rho}(\rho,\eta)(x).
\]
For the choices above, we have
\[
q_{\rho,V}(x)=V(x)
\]
in the valley-potential case,
\[
q_{\rho,\rho_d}(x)=\rho(x)-\rho_d(x)
\]
for the quadratic tracking term, and
\[
q_{\rho,\rho_d}(x)
=
\frac12
\left[
\log\left(\frac{\rho(x)}{\rho_d(x)}\right)+1
\right]
\]
for the Kullback-Leibler divergence.

Further, we introduce the admissible set
\[
\mathcal U_{\rm ad}
:=
\left\{
u\in H_0^1(\Omega):
m_1\leq u(x)\leq m_2
\ \text{for a.e. }x\in\Omega
\right\},
\]
where \(m_1,m_2\in\mathbb R\), \(m_1<m_2\), are prescribed constants. This choice
ensures, in particular, that
\[
e^{-u/T}\in L^\infty(\Omega)\subset L^2(\Omega),
\]
so that the well-posedness theory developed in the previous sections applies.

The optimal design problem is then the constrained minimization problem
\[
	\begin{aligned}
		&\min_{\rho,U,u}
		\left\{
		\frac12 G(\rho,\eta)
		+
		\frac{\alpha}{2}\|u\|_{H^1(\Omega)}^2
		\right\}
		\\
		&\text{subject to}
		\\
		&\rho(x)
		=
		\dfrac{
			\exp\left(-\dfrac{U(x)+u(x)}{T}\right)
		}{
			\displaystyle\int_\Omega
			\exp\left(-\dfrac{U(y)+u(y)}{T}\right)\,dy
		},
		\\[2ex]
		&-\Delta U=\rho
		\qquad\text{in }\Omega,
		\\
		&U=0
		\qquad\text{on }\partial\Omega,
		\\
		&u\in\mathcal U_{\rm ad}.
	\end{aligned}
\tag{OC}
\]

Equivalently, using the control-to-state map
\[
S:\mathcal U_{\rm ad}\to H^2(\Omega)\cap H_0^1(\Omega),
\qquad
S(u)=U_u,
\]
we may write the problem in reduced form. Namely, define
\[
\widehat J(u;\eta)
:=
J(\rho_u,u;\eta)=
\frac12 G(\rho_u,\eta)
+
\frac{\alpha}{2}\|u\|_{H^1(\Omega)}^2.
\]
Then the reduced optimal design problem is formulated as follows
\[
	\min_{u\in\mathcal U_{\rm ad}} \widehat J(u;\eta).
\tag{ROC}
\]

A minimizer \(u^\ast\) of \((ROC)\) is an optimal external potential. The
corresponding optimal equilibrium potential and density are
\[
U^\ast=S(u^\ast),
\quad \textrm{and}
\quad
\rho^\ast(x)
=
\dfrac{
	\exp\left(-\dfrac{U^\ast(x)+u^\ast(x)}{T}\right)
}{
	\displaystyle\int_\Omega
	\exp\left(-\dfrac{U^\ast(y)+u^\ast(y)}{T}\right)\,dy
}.
\]

\section{First-order optimality conditions}
\label{sec:first-order-optimality}

We now derive the formal first-order optimality conditions associated with the
general optimal design problem introduced above. We define
\[
\Phi(U,u)(x)
:=
\dfrac{
	\exp\left(-\dfrac{U(x)+u(x)}{T}\right)
}{
	\displaystyle\int_\Omega
	\exp\left(-\dfrac{U(y)+u(y)}{T}\right)\,dy
},\quad \textrm{and} \quad
\rho(x):=\Phi(U,u)(x).
\]
For a function \(h\), we introduce the notation 
$\langle h\rangle_\rho := \int_\Omega h(x)\rho(x)\,dx$. 

To compute the derivative of \(\Phi\) with respect to \(U\), set
\[
Z(U,u)
:=
\int_\Omega
\exp\left(-\frac{U(y)+u(y)}{T}\right)\,dy.
\]
Let \(\xi\) be an admissible variation of \(U\). Then
\[
D_U Z(U,u)[\xi]
=
-\frac1T
\int_\Omega
\exp\left(-\frac{U(y)+u(y)}{T}\right)\xi(y)\,dy.
\]
Since
\[
\exp\left(-\frac{U(y)+u(y)}{T}\right)
=
Z(U,u)\rho(y),
\]
we obtain
\[
D_U Z(U,u)[\xi]
=
-\frac{Z(U,u)}{T}
\int_\Omega \rho(y)\xi(y)\,dy.
\]
Using the quotient rule, we get that the Fréchet derivative of \(\Phi\) with respect to \(U\) is given by
\[
\begin{aligned}
	D_U\Phi(U,u)[\xi](x)
	&=
	-\frac1T\rho(x)\xi(x)
	+
	\frac1T\rho(x)
	\int_\Omega \rho(y)\xi(y)\,dy
	\\
	&=
	-\frac1T\rho(x)
	\left[
	\xi(x)-\langle \xi\rangle_\rho
	\right].
\end{aligned}
\]
The subtraction of the mean is a consequence of the normalization of
\(\Phi\). In particular, we obtain
\[
\int_\Omega D_U\Phi(U,u)[\xi](x)\,dx=0,
\]
as expected for the derivative of a probability density. Analogously, we have
\[
D_u\Phi(U,u)[h](x)
=
-\frac1T\rho(x)
\left[
h(x)-\langle h\rangle_\rho
\right].
\]

Next, we introduce the Lagrangian
\[
\mathcal L(U,u,p)
=
\frac12G(\rho,\eta)
+
\frac{\alpha}{2}\|u\|_{H^1(\Omega)}^2
+
\int_\Omega \nabla U\cdot\nabla p\,dx
-
\int_\Omega \rho\,p\,dx,
\]
where \(p\in H_0^1(\Omega)\) is the adjoint variable, namely the Lagrange multiplier
associated with the state equation

Taking variations with respect to \(U\), we obtain
\[
\int_\Omega \nabla \xi\cdot\nabla p\,dx
+
\int_\Omega
(q_{\rho,\eta}-p)D_U\Phi(U,u)[\xi]\,dx
=0 ,
\qquad
 \xi\in H_0^1(\Omega).
\]
Using the formula for \(D_U\Phi\), this becomes
\[
\int_\Omega \nabla \xi\cdot\nabla p\,dx
-
\frac1T
\int_\Omega
\rho(x)
\left[
(q_{\rho,\eta}(x)-p(x))
-
\langle q_{\rho,\eta}-p\rangle_\rho
\right]\xi(x)\,dx
=0.
\]
Therefore the adjoint equation is
\[
\begin{cases}
	-\Delta p
	=
	\dfrac1T
	\rho(x)
	\left[
	q_{\rho,\eta}(x)-p(x)
	-
	\displaystyle\int_\Omega \rho(y)(q_{\rho,\eta}(y)-p(y))\,dy
	\right],
	& x\in\Omega,\\[2ex]
	p=0,
	& x\in\partial\Omega.
\end{cases}
\tag{A}
\]

We now take variations with respect to the control \(u\). If the admissible set is
\[
\mathcal U_{\rm ad}
=
\left\{
u\in H_0^1(\Omega):
m_1\leq u\leq m_2
\ \text{a.e. in }\Omega
\right\},
\]
then the optimality condition is a variational inequality:
\[
\alpha (u,v-u)_{H^1(\Omega)}
+
\int_\Omega
(q_{\rho,\eta}-p)D_u\Phi(U,u)[v-u]\,dx
\geq 0 ,
\qquad
v\in\mathcal U_{\rm ad}.
\]
Using the expression of \(D_u\Phi\), we obtain
\[
\alpha (u,v-u)_{H^1(\Omega)}
-
\frac1T
\int_\Omega
\rho(x)
\left[
q_{\rho,\eta}(x)-p(x)
-
\langle q_{\rho,\eta}-p\rangle_\rho
\right]
(v(x)-u(x))\,dx
\geq 0
\]
for every \(v\in\mathcal U_{\rm ad}\).

We now collect these results that provide our first-order 
necessary optimality conditions. 
For convenience of notation, we define the centered quantity
\[
\mathcal M_\rho(x)
:=
q_{\rho,\eta}(x)-p(x)-\langle q_{\rho,\eta}-p\rangle_\rho.
\]
Thus our first-order optimality system is given by
\[
	\begin{aligned}
		-\Delta U &= \rho
		&&\text{in }\Omega,\\
		U&=0
		&&\text{on }\partial\Omega,\\[1ex]
		\rho(x)
		&=
		\dfrac{
			\exp\left(-\dfrac{U(x)+u(x)}{T}\right)
		}{
			\displaystyle\int_\Omega
			\exp\left(-\dfrac{U(y)+u(y)}{T}\right)\,dy
		}
		&& x\in\Omega,\\[2ex]
		-\Delta p
		&=
		\dfrac1T\,\rho\,\mathcal M_\rho
		&&\text{in }\Omega,\\
		p&=0
		&&\text{on }\partial\Omega.
	\end{aligned}
\tag{OS}
\]
The optimality condition for the control reads
\[
	\begin{aligned}
		&\alpha (u,v-u)_{H^1(\Omega)}
		-
		\frac1T
		\int_\Omega
		\rho(x)\mathcal M_\rho(x)(v(x)-u(x))\,dx
		\geq 0,
	\end{aligned}
\tag{VI}
\label{VI}
\]
for all $v\in\mathcal U_{\rm ad}$.

We remark that, if no pointwise constraints are imposed, or if the 
control constraints are not active, then the variational inequality becomes the equation
\[
\alpha (u,h)_{H^1(\Omega)}=
\frac1T
\int_\Omega
\rho(x)\mathcal M_\rho(x)h(x)\,dx ,
\qquad
 h\in H_0^1(\Omega).
\tag{OE}
\]
If the \(H^1\)-inner product is defined by
\[
(u,h)_{H^1(\Omega)}
=
\int_\Omega \nabla u\cdot\nabla h\,dx
+
\int_\Omega uh\,dx,
\]
then \((OE)\) can be written in weak form as follows
\[
\alpha
\int_\Omega \nabla u\cdot\nabla h\,dx
+
\alpha
\int_\Omega uh\,dx
=
\frac1T
\int_\Omega
\rho\,\mathcal M_\rho h\,dx ,
\qquad
h\in H_0^1(\Omega).
\]
Thus, formally, at optimality the optimal control satisfies the following equation
\[
	\alpha(-\Delta u+u)
	=
	\frac1T\rho\,\mathcal M_\rho
	\qquad\text{in }\Omega,
\tag{CE}
\]
with homogeneous Dirichlet boundary condition.

\section{Numerical results}

We illustrate the performance of the proposed optimization framework by
considering the three design criteria introduced in Section~6. The
computational domain is the unit disk
\[
\Omega=\{x\in\mathbb R^2:|x|<1\},
\]
and the temperature and Tikhonov parameter are fixed as
\[
T=1,
\qquad
\alpha=10^{-4}.
\]
The state and the control are initialized at zero. The computations are
performed in polar coordinates on a grid with $N_r=100$ radial nodes and
$N_\varphi=120$ angular nodes; the radial grid is refined in order to better
resolve the transition layers of the design data. The nonlinear state equation
is solved by fixed-point iteration, while the reduced optimization problem is
solved by the nonlinear conjugate-gradient method described in Section~8. The
optimization is stopped when the $H^1(\Omega)$-norm of the reduced gradient is
smaller than $10^{-6}$.

The design region is the annulus determined by
\[
r_{\mathrm{in}}=\frac{2}{5},
\qquad
r_{\mathrm{out}}=\frac{1}{2}.
\]
To obtain smooth design data, we introduce
\[
\chi_\varepsilon(r)
=
\frac12\left[
\tanh\left(\frac{r-r_{\mathrm{in}}}{\varepsilon}\right)
-
\tanh\left(\frac{r-r_{\mathrm{out}}}{\varepsilon}\right)
\right],
\qquad
\varepsilon=10^{-2}.
\]
For the valley-potential functional, we take
\[
V(r)=-\chi_\varepsilon(r),
\]
whereas the common target density used in the quadratic and
Kullback--Leibler cases is
\[
\rho_d(r)
=
\frac{\chi_\varepsilon(r)+\rho_{\mathrm b}}
{\displaystyle\int_\Omega
	\bigl(\chi_\varepsilon(|x|)+\rho_{\mathrm b}\bigr)\,dx},
\qquad
\rho_{\mathrm b}=10^{-6}.
\]
The small positive background $\rho_{\mathrm b}$ guarantees that the target
is strictly positive, as required by the Kullback--Leibler functional.

\begin{figure}[htbp]
	\centering
	\includegraphics[width=\textwidth]{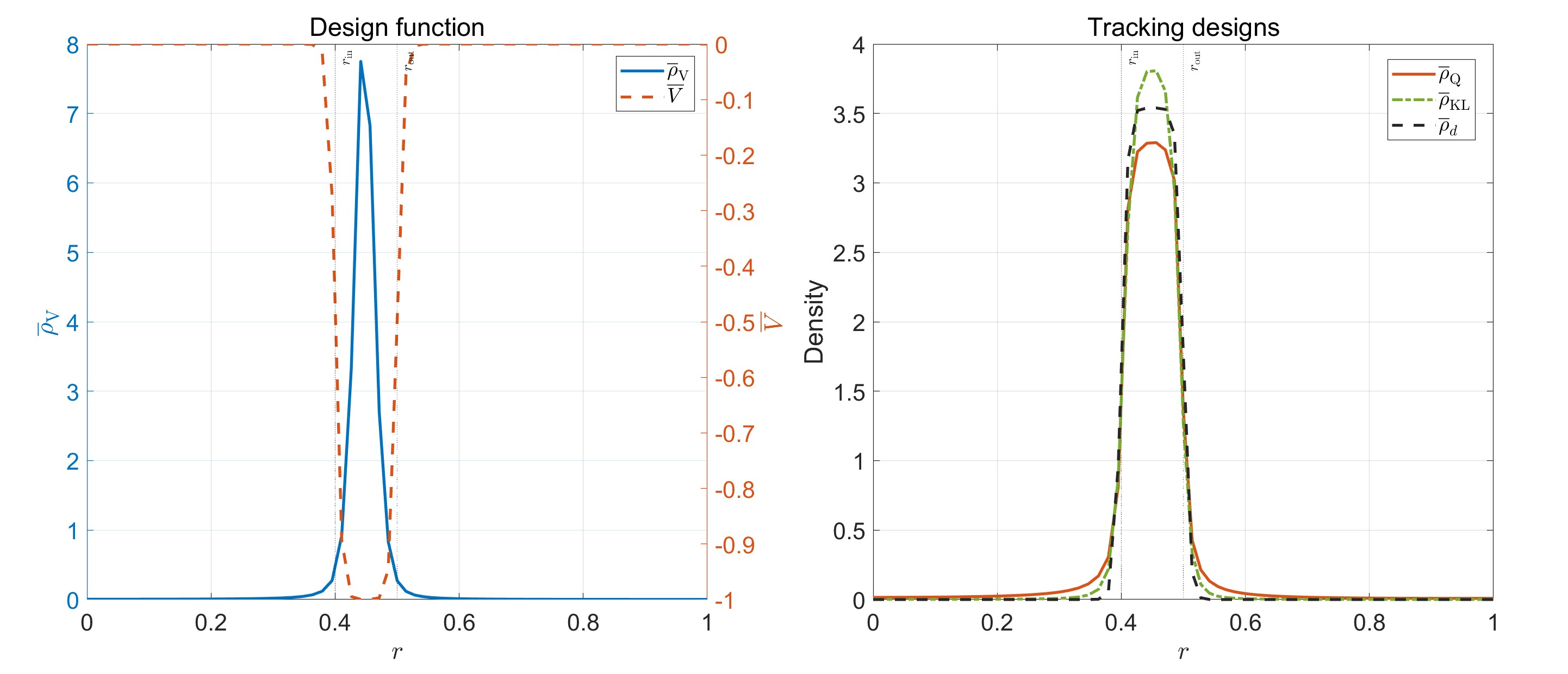}
	\caption{Angularly averaged design data and  optimal densities. Left: the
		optimal density $\overline\rho_{\mathrm V}$ obtained with the valley-potential
		functional and the corresponding design function $\overline V$. Right: the
		optimal densities $\overline\rho_{\mathrm Q}$ and
		$\overline\rho_{\mathrm{KL}}$ obtained with the quadratic and
		Kullback--Leibler criteria, respectively, together with their common target
		$\overline\rho_d$. }
	\label{fig1}
\end{figure}

Figure~\ref{fig1} highlights the different meaning of the three design
criteria. The valley-potential functional does not prescribe a complete target
profile; it only rewards the placement of mass in the region where $V$ is
smallest. Accordingly, the resulting density is sharply concentrated near the
middle of the prescribed annulus and reaches a considerably larger maximum
than the two tracking solutions. The quadratic and Kullback--Leibler criteria,
on the other hand, reproduce both the position and the width of the desired
annular profile. Their densities are very close to one another and to $\rho_d$,
with only small discrepancies near the transition layers and on the plateau.
The Kullback--Leibler solution is slightly sharper, whereas the quadratic
tracking solution is somewhat more diffuse near the edges of the annulus.

\begin{figure}[htbp]
	\centering
	\includegraphics[width=\textwidth]{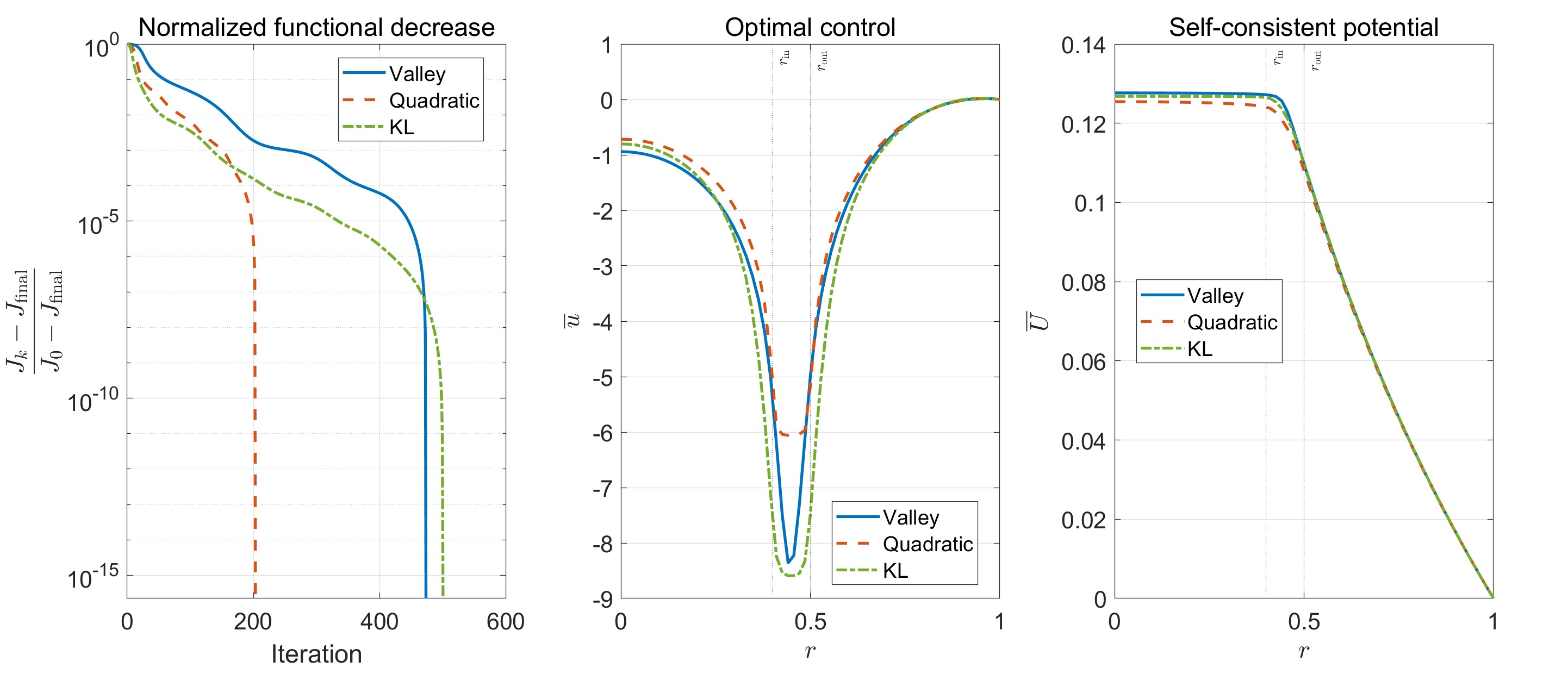}
	\caption{Comparison of the three design problems. Left: normalized decrease
		of the cost functional,
		$(J_k-J_{\mathrm{final}})/(J_0-J_{\mathrm{final}})$. Center: angularly
		averaged optimal controls. Right: angularly averaged self-consistent
		potentials.}
	\label{fig2}
\end{figure}

The left panel of Figure~\ref{fig2} shows a monotone decrease of the cost in
all three cases, consistently with the Armijo line search. The quadratic
tracking problem reaches its final numerical value in fewer iterations,
whereas the valley-potential and Kullback--Leibler problems exhibit a more
gradual decrease. In all the experiments, the trial step length $s=2$ is
accepted without backtracking. The central panel shows that the main
differences among the three optimal designs are carried by the external
control. The quadratic criterion requires a noticeably shallower potential
well, while the valley and Kullback--Leibler criteria produce stronger
negative controls around the design annulus. By contrast, the three
self-consistent potentials displayed in the right panel are nearly
indistinguishable. This reflects the smoothing action of the Poisson equation
and the fact that all three normalized densities have unit mass and are
localized in the same radial region.

\begin{figure}[htbp]
	\centering
	\includegraphics[width=\textwidth]{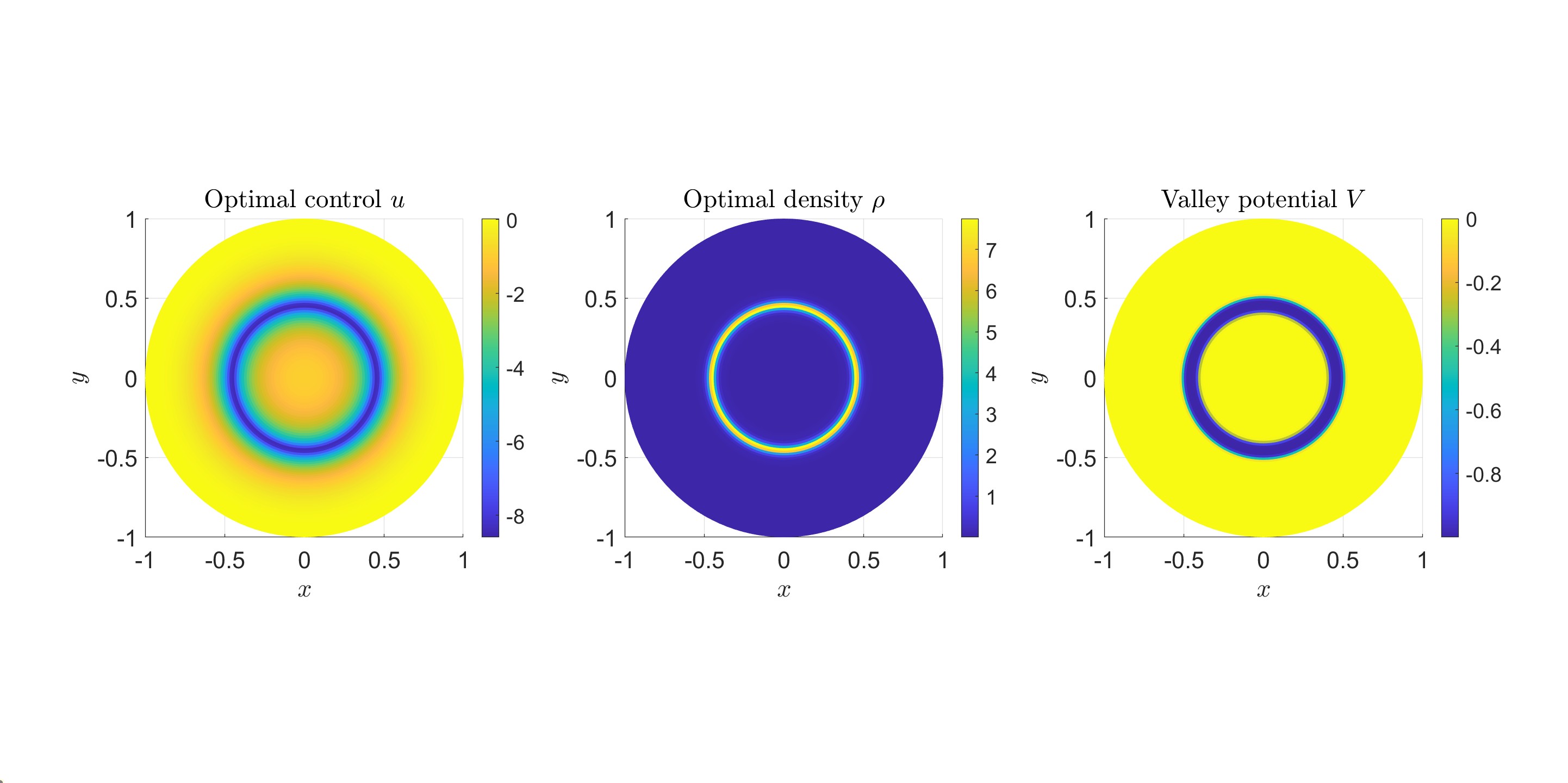}
	\caption{Valley-potential design. From left to right: optimal external potential $u$, optimal equilibrium
		density $\rho$,  and valley potential $V$.}
	\label{fig3}
\end{figure}

The two-dimensional fields for the valley-potential case are shown in
Figure~\ref{fig3}. The optimized control creates a negative well around the
prescribed annulus, producing a density concentrated along a thinner ring.
This occurs because the term \(\int_{\Omega}V\rho\,dx\) favours mass
concentration where \(V\) is minimal, without enforcing the reproduction of
a prescribed density profile.

\begin{figure}[htbp]
	\centering
	\includegraphics[width=\textwidth]{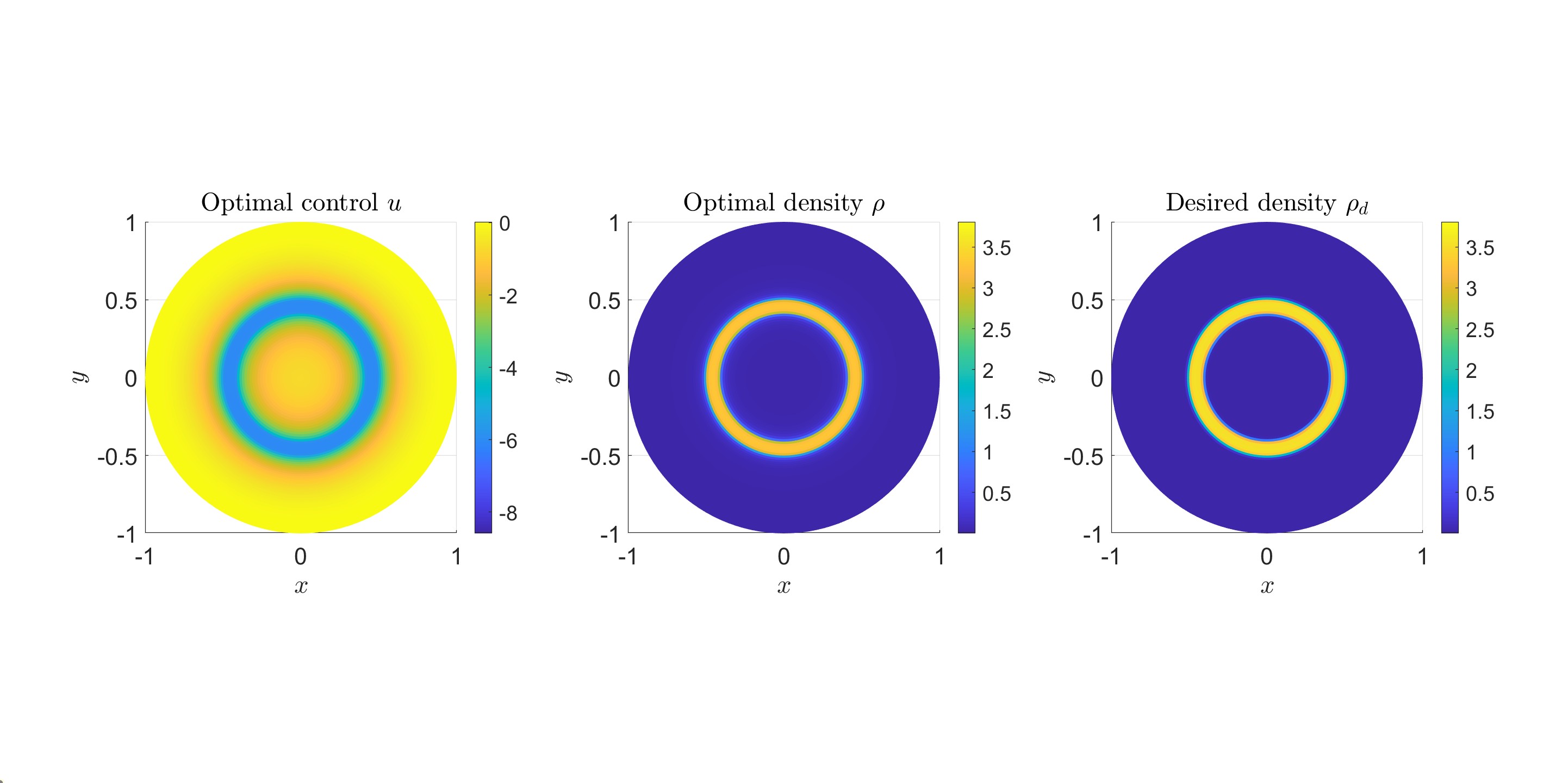}
	\caption{Quadratic tracking design. From left to right: optimal external potential $u$, optimal equilibrium
		density $\rho$,  and desired density $\rho_d$.}
	\label{fig4}
\end{figure}

Figure~\ref{fig4} reports the result for the quadratic tracking functional.
The optimal density correctly reproduces the annular support, radius, and
width of $\rho_d$. Its maximum is slightly lower than that of the target, and
a small amount of mass is distributed near the transition regions. This is
the expected compromise between the $L^2$ tracking term and the $H^1$
regularization of the control. In comparison with the other two criteria, the
quadratic functional achieves the prescribed configuration with the least
intense external potential.

\begin{figure}[htbp]
	\centering
	\includegraphics[width=\textwidth]{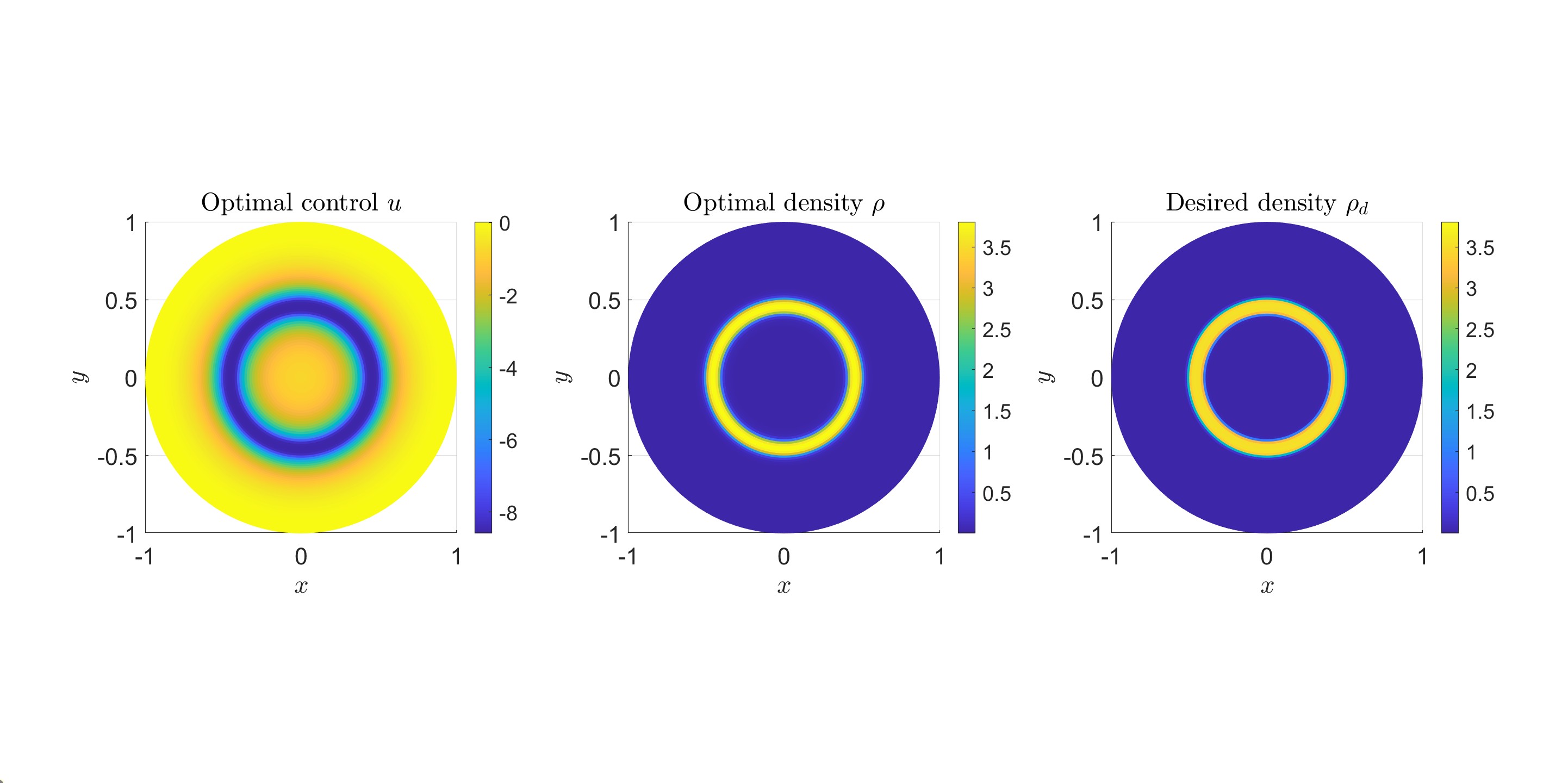}
	\caption{Kullback--Leibler design. From left to right:, optimal external potential $u$, optimal equilibrium
		density $\rho$, and desired density $\rho_d$.}
	\label{fig5}
\end{figure}

The Kullback--Leibler result is displayed in Figure~\ref{fig5}. The optimal
density provides a very accurate reconstruction of the desired annular
configuration and suppresses density leakage in the region where $\rho_d$ is
small. This behaviour is consistent with the relative character of the
Kullback--Leibler divergence, which strongly penalizes discrepancies in
low-target regions. The improved confinement is obtained at the price of a
deeper external potential well than in the quadratic case.

Overall, the numerical experiments confirm that the same state, adjoint, and
gradient framework can accommodate substantially different notions of optimal
design. The valley-potential criterion is particularly effective when only a
preferred confinement region is prescribed, while the quadratic and
Kullback--Leibler criteria allow direct approximation of a desired equilibrium
density. Among the two tracking formulations, the quadratic functional uses a
milder control, whereas the Kullback--Leibler divergence yields a sharper
relative match to the target. The close agreement of the corresponding
self-consistent potentials also indicates that appreciably different external
fields can produce equilibrium configurations with similar macroscopic
self-consistent electrostatic response.

\end{document}